\renewcommand{\@seccntformat}[1]{{\csname the#1\endcsname}.\hspace{.5em}}
\newtheorem{thm}{Theorem}[section]
\newtheorem{cor}[thm]{Corollary}
\newtheorem{lem}[thm]{Lemma}
\newcommand{\pf}{\noindent{\it Proof.} }
\renewcommand{\qed}{\hfill$\Box$\medskip}
\numberwithin{equation}{section}
\begin{document}

\renewcommand{\thefootnote}{*}

\begin{center}
{\Large\bf Factors of certain sums involving central $q$-binomial\\[5pt] coefficients}
\end{center}

\vskip 2mm \centerline{Victor J. W. Guo$^1$ and Su-Dan Wang$^{2}$\footnote{Corresponding author.}}
\begin{center}
{\footnotesize $^1$School of Mathematics and Statistics, Huaiyin Normal University, Huai'an 223300, Jiangsu,\\
 People's Republic of China\\
{\tt jwguo@hytc.edu.cn}\\[10pt]

$^2$College of Mathematics Science, Inner Mongolia Normal University, Huhhot 010022, Inner Mongolia,\\
 People's Republic of China\\
 {\tt sdwang@imnu.edu.cn}
 }
\end{center}

\vskip 0.7cm \noindent{\small{\bf Abstract.}}  Recently, Ni and Pan proved a $q$-congruence on certain sums involving central $q$-binomial coefficients,
which was conjectured by Guo. In this paper, we give a generalization of  this $q$-congruence and confirm another $q$-congruence,
also conjectured by Guo. Our proof uses Ni and Pan's technique and a simple $q$-congruence observed by Guo and Schlosser.

\vskip 3mm \noindent {\it Keywords}: congruences; $q$-binomial coefficients; cyclotomic polynomials.

\vskip 3mm \noindent {\it 2010 Mathematics Subject Classifications}: 11B65, 05A10, 05A30

\section{Introduction}
In 1914, Ramanujan \cite{Ramanujan} obtained a number of representations for $1/\pi$.  One such instance,
though not listed in \cite{Ramanujan}, is the identity
\begin{align}
\sum^{\infty}_{k=0}\frac{4k+1}{(-64)^k}{2k\choose k}^3 =\frac{2}{\pi}, \label{eq:Ramanujan}
\end{align}
which was first proved by Bauer \cite{Bauer} in 1859. Such formulas for $1/\pi$ gained popularity in 1980's
for the reason that they can be utilized to provide fast algorithms for
computing decimal digits of $\pi$. See, for example, the Borwein brothers' monograph \cite{BB}.
Recently, Guillera \cite{Guillera} gave a general method to prove Ramanujan-type series.
For a $q$-analogue of \eqref{eq:Ramanujan},  see \cite{GuoZu0}.

In 1997, Van Hamme \cite{VHamme} conjectured that 13 Ramanujan-type series possess nice $p$-adic analogues, such as
\begin{align}
\sum^{(p-1)/2}_{k=0}\frac{4k+1}{(-64)^k}{2k\choose k}^3 \equiv p(-1)^{(p-1)/2} \pmod{p^3},\label{eq:VAN}
\end{align}
where $p$ is an odd prime. The congruence (1.2) was confirmed by Mortenson \cite{Mortenson} using a $_6F_5$ transformation, and
was reproved by Zudilin \cite{Zud2009} employing the WZ (Wilf--Zeilberger) method. In 2013, also via the WZ method, Sun \cite{Sun2013}
gave the following generalization of \eqref{eq:VAN}: for any integer $n\geqslant 2$,
\begin{align}
\sum^{n-1}_{k=0}(4k+1){2k\choose k}^3(-64)^{n-k-1}\equiv0
\pmod{2n{2n\choose n}}.\label{eq:Sun1-old}
\end{align}
Recently, among other things,
Ni and Pan \cite{NP} proved the following extension of \eqref{eq:Sun1-old}: for each $n\geqslant 2$ and $r\geqslant 1$,
\begin{align}
\sum^{n-1}_{k=0}(4k+1){2k\choose k}^r(-4)^{r(n-k-1)}\equiv0
\pmod{2^{r-2}n{2n\choose n}}.\label{eq:Sun1}
\end{align}

In fact, Ni and Pan also gave a $q$-analogue of \eqref{eq:Sun1}:
for each $n\geqslant2$ and $r\geqslant2$, modulo $(1+q^{n-1})^{2r-2}[n]{2n-1\brack n-1}$,
\begin{align}
\sum_{k=0}^{n-1}(-1)^kq^{k^2+(r-2)k}[4k+1]{2k\brack k}^{2r-1}
(-q^{k+1};q)_{n-k-1}^{4r-2}\equiv 0 , \label{eq:guo0}\\
\frac{1}{1+q^{n-1}}\sum _{k=0}^{n-1}q^{(r-2)k}[4k+1]{2k\brack k}^{2r}
(-q^{k+1};q)_{n-k-1}^{4r} \equiv 0. \label{eq:guo00}
\end{align}
Here $(a;q)_n=(1-a)(1-aq)\cdots(1-aq^{n-1})$ stands for the $q$-\emph{shifted factorial}, and
$[n]=[n]_q=1+q+\cdots+q^{n-1}$ is the \emph{$q$-integer}. The $q$-congruences \eqref{eq:guo0} and \eqref{eq:guo00} were originally conjectured by the first author
\cite[Conjecture 5.4]{Guo2018}. They are obviously true for $r=1$ (the left-hand sides have closed forms; see \cite{Guo2018}).
The $r=2$ case of \eqref{eq:guo0} is a $q$-analogue of \eqref{eq:Sun1} and was first proved by the first author himself.
The $r=2$ case of \eqref{eq:guo00} was first obtained by the present authors \cite{GW}.

The first aim of this paper is to prove the following generalizations of \eqref{eq:guo0} and \eqref{eq:guo00}.
\begin{thm}\label{conj:guo}
For each $n\geqslant 2$, $r\geqslant 1$ and $s\geqslant 0$, modulo $(1+q^{2n-2})^{2r-2}[n]_{q^2}{2n-1\brack n-1}_{q^2}$,
\begin{align}
\sum_{k=0}^{n-1}(-1)^kq^{2k^2+(2r-4s-4)k}[4k+1]^{2s}[4k+1]_{q^2}{2k\brack k}^{2r-1}_{q^2}(-q^{2k+2};q^2)_{n-k-1}^{4r-2}
\equiv 0 \label{eq:guo1},\\
\frac{1}{1+q^{2n-2}}\sum_{k=0}^{n-1}q^{(2r-4s-4)k}[4k+1]^{2s}[4k+1]_{q^2}{2k\brack k}^{2r}_{q^2}(-q^{2k+2};q^2)_{n-k-1}^{4r}
\equiv 0. \label{eq:guo2}
\end{align}
\end{thm}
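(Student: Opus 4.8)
The plan is to adapt the method of Ni and Pan \cite{NP}, with which \eqref{eq:guo0} and \eqref{eq:guo00} were established. The first observation is that the $s=0$ instances of \eqref{eq:guo1} and \eqref{eq:guo2} are exactly \eqref{eq:guo0} and \eqref{eq:guo00} with $q$ replaced by $q^2$, so the only genuinely new feature is the extra factor $[4k+1]^{2s}=[4k+1]_q^{2s}$. Writing the $k$-th summand on the left-hand side of \eqref{eq:guo1} as its $s=0$ counterpart (in base $q^2$) times
\[
q^{-4sk}[4k+1]_q^{2s}=\bigl(q^{-2k}[4k+1]_q\bigr)^{2s},
\]
one sees that the new weight is an even power of $q^{-2k}[4k+1]_q=q^{-2k}+q^{-2k+1}+\cdots+q^{2k}$, which is a self-reciprocal Laurent polynomial, i.e.\ unchanged under $q\mapsto q^{-1}$; the same holds for \eqref{eq:guo2}. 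Thus the new factor is a symmetric perturbation, and the task is to show that inserting it does not spoil the divisibility already known for $s=0$.

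Since the modulus factors into pairwise coprime prime powers $\Phi_d(q)^{e_d}$, where $\Phi_d(q)$ is the $d$-th cyclotomic polynomial, it suffices to establish divisibility by each of these. Starting from the identities
\[
(-q^{2k+2};q^2)_{n-k-1}=\frac{(-q^2;q^2)_{n-1}\,(q^2;q^2)_k}{(q^4;q^4)_k},\qquad
{2k\brack k}_{q^2}=\frac{(q^2;q^4)_k\,(q^4;q^4)_k}{(q^2;q^2)_k^2},
\]
one rewrites the left-hand side of \eqref{eq:guo1} as $(-q^2;q^2)_{n-1}^{4r-2}$ times a sum whose $k$-dependence is confined to a power of $q$, to $[4k+1]_q^{2s}[4k+1]_{q^2}$, and to a fixed power of ${2k\brack k}_{q^2}/(-q^2;q^2)_k$; likewise for \eqref{eq:guo2}, with $4r-2$ replaced by $4r$ and the factor $1/(1+q^{2n-2})$ absorbed into the enlarged prefactor. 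A useful structural point is that the prefactor $(-q^2;q^2)_{n-1}=\prod_{j=1}^{n-1}(1+q^{2j})$ already carries the entire factor $(1+q^{2n-2})^{2r-2}$; in fact every single summand of \eqref{eq:guo1} turns out to be divisible by $\Phi_d(q)^{e_d}$ for each $d$ with $\Phi_d(q)\mid 1+q^{2n-2}$, so this part of the modulus costs nothing. It then remains to treat the factors $\Phi_d(q)^{e_d}$ coming from $[n]_{q^2}{2n-1\brack n-1}_{q^2}$, where $d\mid 2n$, $d>2$ and $e_d\leqslant 2$ (the case $d=4$ being again covered by the prefactor). For these I would run Ni and Pan's argument at a primitive $d$-th root of unity $\zeta$: a $q$-analogue of Lucas' theorem factors ${2k\brack k}_{q^2}$ according to the base-$d$ digits of $k$, the vanishing of $(q^2;q^4)_k$ modulo $\Phi_d(q)$ for large $k$ collapses the sum to a short one, and the remaining terms are shown to cancel (to first order in $q-\zeta$ as well, when $e_d=2$).

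The genuinely new input is to carry the weight $\bigl(q^{-2k}[4k+1]_q\bigr)^{2s}$ through this reduction, and this is where the simple $q$-congruence observed by Guo and Schlosser enters; concretely it can be applied through the elementary relation $(1+q)[4k+1]_{q^2}=(1+q^{4k+1})[4k+1]_q$, which lets one trade $[4k+1]_q$ for $[4k+1]_{q^2}$ up to the factor $(1+q)/(1+q^{4k+1})$, whose behaviour at the relevant roots $\zeta$ is easily determined (and which, at any $\zeta$ where its denominator vanishes, is accompanied by a compensating zero of $[4k+1]_{q^2}$ in the summand). Combined with the self-reciprocity of $q^{-2k}[4k+1]_q$ and of $\Phi_d(q)$, this shows that at each $\Phi_d(q)$ the weight is compatible with the symmetry exploited in the $s=0$ case, so that raising it to an even power preserves the required order of vanishing; the proof of \eqref{eq:guo2} is entirely parallel. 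I expect the main obstacle to be exactly this compatibility check at the higher-multiplicity factors ($e_d=2$): one must verify that the Guo--Schlosser substitution, together with the $q$-Lucas factorization, reproduces the vanishing of the $s=0$ sum to first order in $q-\zeta$ too, so that the even power $[4k+1]_q^{2s}$ never lowers the order of vanishing present when $s=0$.
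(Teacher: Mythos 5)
Your proposal follows essentially the same route as the paper: termwise divisibility by $(1+q^{2n-2})^{2r-1}$ coming from the prefactor and from ${2n-2\brack n-1}_{q^2}$, the Chen--Hou cyclotomic factorization of $[n]_{q^2}{2n-1\brack n-1}_{q^2}$, Ni--Pan's digit reduction (via the $q$-Lucas-type Lemma) for the odd cyclotomic factors together with termwise divisibility for the even ones, and --- the one genuinely new ingredient --- the invariance of the weight $\bigl(q^{-2k}[4k+1]\bigr)^{2s}$ under the pairing $k\mapsto (d-1)/2-k$ modulo $\Phi_d(q^2)$, which is precisely how the paper absorbs the extra factor (the Guo--Schlosser congruence actually invoked is the one governing $(q;q^2)_{(d-1)/2-k}/(q^2;q^2)_{(d-1)/2-k}$, not the identity $(1+q)[m]_{q^2}=(1+q^m)[m]_q$, though your normalization $q^{-4sk}[4k+1]^{2s}$ is exactly the right one). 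The ``main obstacle'' you anticipate at multiplicity-two cyclotomic factors does not arise: since $d\mid n$ forces $d\notin\mathcal{D}_{2n-1,n-1}$, the product $[n]_{q^2}{2n-1\brack n-1}_{q^2}$ is squarefree as a product of cyclotomic polynomials, so the only repeated factors in the modulus are those of $1+q^{2n-2}$, which you have already disposed of termwise.
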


In particular, letting $q\to 1$ in \eqref{eq:guo1} and \eqref{eq:guo2}, we obtain a generalization of \eqref{eq:Sun1}.
\begin{cor}
For each $n\geqslant 2$, $r\geqslant 1$ and $s\geqslant 0$,
\begin{align*}
\sum^{n-1}_{k=0}(4k+1)^{2s+1}{2k\choose k}^r(-4)^{r(n-k-1)}\equiv0
\pmod{2^{r-2}n{2n\choose n}}.
\end{align*}
\end{cor}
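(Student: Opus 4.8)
The plan is to obtain the Corollary as the $q\to 1$ specialization of Theorem~\ref{conj:guo}, carrying out that limit carefully while watching an overall sign and the exact powers of $2$. The one point that requires attention is that the binomial exponent $r$ of the Corollary must be matched to the odd exponent $2r-1$ in \eqref{eq:guo1} when $r$ is odd, and to the even exponent $2r$ in \eqref{eq:guo2} when $r$ is even, so the two parities will be treated by the two halves of the theorem.

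First I would record the relevant limits as $q\to 1$: $[m]_q\to m$ and $[m]_{q^2}\to m$ for any $m$, ${2k\brack k}_{q^2}\to\binom{2k}{k}$, $(-q^{2k+2};q^2)_{n-k-1}=\prod_{j=k+1}^{n-1}(1+q^{2j})\to 2^{\,n-k-1}$, $1+q^{2n-2}\to 2$, $[n]_{q^2}\to n$, and ${2n-1\brack n-1}_{q^2}\to\binom{2n-1}{n-1}=\tfrac12\binom{2n}{n}$. Thus the modulus $(1+q^{2n-2})^{2\rho-2}[n]_{q^2}{2n-1\brack n-1}_{q^2}$ of Theorem~\ref{conj:guo}, with internal parameter $\rho$, tends to $2^{2\rho-2}\cdot n\cdot\tfrac12\binom{2n}{n}=2^{2\rho-3}\,n\binom{2n}{n}$.

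For $r$ odd I would invoke \eqref{eq:guo1} with $\rho=(r+1)/2$, so that $2\rho-1=r$ and $4\rho-2=2r$. Since $q^{2k^2+(2\rho-4s-4)k}\to 1$, the $k$th summand tends to $(-1)^k(4k+1)^{2s+1}\binom{2k}{k}^{r}2^{2r(n-k-1)}=(-1)^k(4k+1)^{2s+1}\binom{2k}{k}^{r}\,4^{r(n-k-1)}$; because $r$ is odd, $(-1)^k=(-1)^{n-1}(-1)^{n-k-1}=(-1)^{n-1}(-1)^{r(n-k-1)}$, so after dividing out the harmless global constant $(-1)^{n-1}$ the congruence \eqref{eq:guo1} becomes exactly $\sum_{k=0}^{n-1}(4k+1)^{2s+1}\binom{2k}{k}^r(-4)^{r(n-k-1)}\equiv 0$ modulo $2^{2\rho-3}n\binom{2n}{n}=2^{r-2}n\binom{2n}{n}$. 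For $r$ even I would use \eqref{eq:guo2} with $\rho=r/2$, so $2\rho=r$; here there is no sign $(-1)^k$, $(-1)^{r(n-k-1)}=1$, and the prefactor $\tfrac1{1+q^{2n-2}}\to\tfrac12$, so the limit of \eqref{eq:guo2} reads $\tfrac12\sum_{k=0}^{n-1}(4k+1)^{2s+1}\binom{2k}{k}^r(-4)^{r(n-k-1)}\equiv 0\pmod{2^{2\rho-3}n\binom{2n}{n}}$, and multiplying through by $2$ yields the claim modulo $2^{2\rho-2}n\binom{2n}{n}=2^{r-2}n\binom{2n}{n}$.

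The only genuine bookkeeping — and hence the "hard part," such as it is — lies in these two parity cases: checking that the internal parameter $\rho$ of Theorem~\ref{conj:guo} is chosen so that $2\rho-1$ (resp.\ $2\rho$) equals $r$, that the resulting power $2^{2\rho-3}$ (resp.\ $2\cdot 2^{2\rho-3}$) equals $2^{r-2}$, and that the sign discrepancy in the odd case is exactly the global unit $(-1)^{n-1}$. Everything else is a termwise substitution $q\to 1$, and no idea beyond Theorem~\ref{conj:guo} is needed.
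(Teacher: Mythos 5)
Your proposal is correct and is exactly the paper's route: the Corollary is stated as the $q\to1$ specialization of Theorem~\ref{conj:guo}, with \eqref{eq:guo1} covering odd exponents $2\rho-1=r$ and \eqref{eq:guo2} covering even exponents $2\rho=r$. Your bookkeeping of the powers of $2$ (including the extra factor of $2$ gained from the prefactor $1/(1+q^{2n-2})$ in the even case) and of the global sign $(-1)^{n-1}$ in the odd case is accurate, and it supplies the details the paper leaves implicit.
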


The first author\cite[Theorem 1.1]{Guo2020} proved that, for any positive odd integer $n$,
\begin{align}
\sum_{k=0}^{n-1}[3k+1]\frac{(q;q^2)_k^3q^{-{k+1\choose 2}}}{(q;q)_k^2(q^2;q^2)_k}\equiv[n]q^{(1-n)/2} \pmod{[n]\Phi_n(q)^2}, \label{eq:0phi-q}
\end{align}
where $\Phi_n(q)$ is the $n$-th \emph{cyclotomic polynomial} in $q$, i.e.,
$$\Phi_n(q)=\prod_{\substack{1\leqslant k\leqslant n\\\gcd(n,k)=1}}^n(q-\zeta^k)$$
with $\zeta$ being an $n$-th primitive root of unity.

The second aim of this paper is to prove the following $q$-congruence, which was originally conjectured by the first author \cite[Conjecture 1.7]{Guo2020}.

\begin{thm}\label{conj:guo2} Let $n\geqslant 2$ be an integer. Then
\begin{align}
\sum_{k=0}^{n-1}[3k+1]{2k\brack k}^3(-q^{k+1};q)_{n-k-1}^4q^{-{k+1\choose 2}}\equiv0\pmod{(1+q^{n-1})^2[n]{2n-1\brack n-1}}. \label{eq:conj-guo2}
\end{align}
\end{thm}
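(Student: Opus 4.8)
The plan is the following. Write $S_n(q)$ for the left-hand side of \eqref{eq:conj-guo2}. The first move is purely algebraic: using $(-q^{k+1};q)_{n-k-1}=(-q;q)_{n-1}/(-q;q)_k$, the evaluation ${2k\brack k}=(q;q^2)_k(-q;q)_k/(q;q)_k$, and $(q;q)_k(-q;q)_k=(q^2;q^2)_k$, one checks term by term that
\[
S_n(q)=(-q;q)_{n-1}^{4}\,L_n(q),\qquad
L_n(q):=\sum_{k=0}^{n-1}[3k+1]\,\frac{{2k\brack k}^{3}\,q^{-\binom{k+1}{2}}}{(-q;q)_k^{4}}\,.
\]
Since $\dfrac{{2k\brack k}^{3}}{(-q;q)_k^{4}}=\dfrac{(q;q^2)_k^3}{(q;q)_k^2(q^2;q^2)_k}$, the sum $L_n(q)$ is precisely the truncated sum appearing in \eqref{eq:0phi-q}. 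It remains to show that $(-q;q)_{n-1}^{4}L_n(q)$ is divisible by $M:=(1+q^{n-1})^2[n]{2n-1\brack n-1}$, which I do one cyclotomic polynomial at a time. For an integer $d\ge 2$, the multiplicity of $\Phi_d(q)$ in $M$ is at most $3$, and it is positive exactly when one of the following holds: $(\mathrm a)$ $\tfrac d2\mid n-1$ with $(n-1)/\tfrac d2$ odd; $(\mathrm b)$ $d\mid n$; $(\mathrm c)$ there is a carry in the base-$d$ addition $(n-1)+n$ (equivalently $\Phi_d\mid{2n-1\brack n-1}$).

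\emph{Even $d$.} Each of $(\mathrm a)$--$(\mathrm c)$ forces $(n-1)\bmod d\ge d/2$. Under this condition, a direct count at a primitive $d$-th root of unity $\zeta$ shows that $(-q;q)_{n-1}$ vanishes to order $\lfloor(n-1)/d\rfloor+1$ while each summand of $L_n(q)$ has at most a pole of order $4\lfloor(n-1)/d\rfloor+1$ there, so $(-q;q)_{n-1}^{4}L_n(q)=S_n(q)$ vanishes to order at least $3$ at $\zeta$, i.e. $\Phi_d(q)^{3}\mid S_n(q)$. Hence every even cyclotomic factor of $M$ — in particular the two copies of $1+q^{n-1}$, which involve only even indices — is accounted for.

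\emph{Odd $d$.} Suppose $\Phi_d(q)\mid M$ with $d\ge 3$ odd. Then $\Phi_d$ divides neither $1+q^{n-1}$ nor $(-q;q)_{n-1}$, so the multiplicity of $\Phi_d$ in $M$ equals $1$ and it suffices to prove $\Phi_d(q)\mid L_n(q)$. Here Ni and Pan's technique enters. Let $\zeta$ be a primitive $d$-th root of unity and write $k=d\ell+j$ with $0\le j\le d-1$. The $q$-Lucas theorem gives ${2k\brack k}\equiv\binom{2\ell}{\ell}{2j\brack j}\pmod{\Phi_d(q)}$, and the simple $q$-congruence $\prod_{m=0}^{d-1}(1+q^m)\equiv 2\pmod{\Phi_d(q)}$ of Guo and Schlosser gives $(-q;q)_k\equiv 2^{\ell}(-q;q)_j\pmod{\Phi_d(q)}$; together with $[3k+1]\equiv[3j+1]$ and $q^{-\binom{k+1}{2}}\equiv q^{-\binom{j+1}{2}}\pmod{\Phi_d(q)}$ these give, for $0\le j\le\tfrac{d-1}2$,
\[
[3k+1]\,\frac{{2k\brack k}^{3}q^{-\binom{k+1}{2}}}{(-q;q)_k^{4}}\equiv\frac{\binom{2\ell}{\ell}^{3}}{16^{\ell}}\cdot[3j+1]\,\frac{{2j\brack j}^{3}q^{-\binom{j+1}{2}}}{(-q;q)_j^{4}}\pmod{\Phi_d(q)},
\]
whereas for $\tfrac{d+1}2\le j\le d-1$ the left-hand side is $\equiv 0\pmod{\Phi_d(q)}$, since then ${2k\brack k}\equiv\binom{2\ell+1}{\ell}{2j-d\brack j}\equiv 0$. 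Writing $c_j$ for the $j$-th factor on the right (so $c_j\equiv 0\pmod{\Phi_d(q)}$ for $j>\tfrac{d-1}2$) and $n=ad+b$ with $0\le b<d$, summation over $k=0,\dots,n-1$ yields
\[
L_n(q)\equiv\Bigl(\sum_{\ell=0}^{a-1}\frac{\binom{2\ell}{\ell}^{3}}{16^{\ell}}\Bigr)\sum_{j=0}^{d-1}c_j+\frac{\binom{2a}{a}^{3}}{16^{a}}\sum_{j=0}^{b-1}c_j\pmod{\Phi_d(q)}.
\]
Now $\sum_{j=0}^{d-1}c_j\equiv L_d(q)\equiv 0\pmod{\Phi_d(q)}$ by \eqref{eq:0phi-q} applied to the odd integer $d$ (as $\Phi_d(q)\mid[d]$), and in the last sum either $b=0$ (case $(\mathrm b)$) or $b\ge\tfrac{d+1}2$ (case $(\mathrm c)$), so $\sum_{j=0}^{b-1}c_j=\sum_{j=0}^{d-1}c_j\equiv 0$. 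Hence $\Phi_d(q)\mid L_n(q)$, which finishes the odd case and the proof.

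\emph{Main obstacle.} The delicate part is the $q$-Lucas reduction of each summand of $L_n$ at $q=\zeta$: for $\ell\ge 1$ the numerator and the denominator of ${2k\brack k}^{3}/(-q;q)_k^{4}$ both vanish at $\zeta$, and one must match their orders of vanishing precisely to read off the clean residue $16^{-\ell}\binom{2\ell}{\ell}^{3}$ times the corresponding base-$j$ term. This is exactly where the Guo--Schlosser $q$-congruence does the work. Given that factorization and the input congruence \eqref{eq:0phi-q}, the reduction to $L_d(q)$ in the odd case and the order-of-vanishing count in the even case are routine, so I expect the $q$-Lucas step to be the crux.
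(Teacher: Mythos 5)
Your proof is correct, and its engine is the same as ours: after pulling out $(-q;q)_{n-1}^4$ you reduce to the truncated sum of \eqref{eq:0phi-q}, and for odd $d$ you run exactly the Ni--Pan periodicity argument (our Lemma \ref{thm:NP}) with the $q$-Lucas theorem, the congruence $(-q;q)_{sd+t}\equiv 2^s(-q;q)_t\pmod{\Phi_d(q)}$ (our Lemma \ref{lem:sd+t}), and \eqref{eq:0phi-q} applied over a full period --- you merely unroll the lemma instead of quoting it, and your multiplier $\binom{2\ell}{\ell}^3/16^{\ell}$ matches ours. Where you genuinely diverge is in the even-indexed cyclotomic factors. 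We treat $(1+q^{n-1})^3$ by exhibiting that factor in each summand, treat each even $d$ dividing $n$ or lying in $\mathcal{D}_{2n-1,n-1}$ by showing ${2k\brack k}(-q^{k+1};q)_{n-k-1}\equiv 0\pmod{\Phi_d(q)}$ term by term, and then reconcile the two via a least-common-multiple computation at the end. You instead do a single valuation count at a primitive $d$-th root of unity for even $d$: the zero of $(-q;q)_{n-1}^4$ has order $4\lfloor(n-1)/d\rfloor+4$ while each summand of $L_n$ has pole order at most $2\lfloor k/d\rfloor+\lfloor 2k/d\rfloor\leqslant 4\lfloor(n-1)/d\rfloor+1$, giving order at least $3$, which covers the (easily checked) maximal multiplicity $3$ of $\Phi_d$ in the modulus. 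This is a tidy unification: it absorbs the $(1+q^{n-1})^2$ factor and the even part of $[n]{2n-1\brack n-1}$ in one stroke and dispenses with the final LCM step; the only prerequisite it adds is the verification (which you state and which is correct) that every even $d$ contributing to the modulus satisfies $(n-1)\bmod d\geqslant d/2$.
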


Letting $q\to 1$ in \eqref{eq:conj-guo2}, we get the following conclusion, which was conjectured by Sun \cite[Conjecture 5.1(i)]{Sun} and was recently
proved by Mao and Zhang \cite{MZ}.
\begin{cor}For each $n\geqslant 2$,
\begin{align*}
\sum_{k=0}^{n-1}(3k+1){2k\choose k}^3 16^{n-k-1}\equiv0 \pmod{2n{2n\choose n}}.
\end{align*}
\end{cor}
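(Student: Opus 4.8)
The plan is to split off a manifest closed-form factor, reducing \eqref{eq:conj-guo2} to the already available $q$-congruence \eqref{eq:0phi-q} together with a divisibility by the remaining, ``cyclotomic'', part of the modulus; the latter I would settle by Ni and Pan's argument and the simple Guo--Schlosser $q$-congruence.

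\textbf{Step 1 (extracting a factor).} Let $S_n(q)$ denote the left-hand side of \eqref{eq:conj-guo2}. Using the routine identities $(-q^{k+1};q)_{n-k-1}=(q^{2k+2};q^2)_{n-k-1}/(q^{k+1};q)_{n-k-1}$, ${2k\brack k}=(q;q^2)_k(-q;q)_k/(q;q)_k$, $(q^{k+1};q)_{n-k-1}=(q;q)_{n-1}/(q;q)_k$, $(q^{2k+2};q^2)_{n-k-1}=(q^2;q^2)_{n-1}/(q^2;q^2)_k$, $(q^2;q^2)_k=(q;q)_k(-q;q)_k$ and $(q^2;q^2)_{n-1}/(q;q)_{n-1}=(-q;q)_{n-1}$, I would check that after cancellation
$$S_n(q)=(-q;q)_{n-1}^{4}\sum_{k=0}^{n-1}[3k+1]\frac{(q;q^2)_k^{3}\,q^{-{k+1\choose 2}}}{(q;q)_k^{2}(q^2;q^2)_k}=:(-q;q)_{n-1}^{4}\,T_n(q),$$
where $T_n(q)$ is precisely the left-hand side of \eqref{eq:0phi-q}.

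\textbf{Step 2 (the easy half of the modulus).} The identity ${2n-1\brack n-1}=(q;q^2)_n(-q;q)_{n-1}/(q;q)_n$ gives the cyclotomic factorisation of the modulus $(1+q^{n-1})^{2}[n]{2n-1\brack n-1}$; in it, each $\Phi_d(q)$ with $d$ even occurs with multiplicity at most $3$. Comparing multiplicities, one sees that $(-q;q)_{n-1}^{4}$ already carries all of these even factors: $1+q^{n-1}$ divides $(-q;q)_{n-1}$, and every even $d$ dividing $[n]$ or occurring in ${2n-1\brack n-1}$ satisfies $d\leqslant 2n-2$, whence $\Phi_d(q)\mid(1+q^{d/2})\mid(-q;q)_{n-1}$ and so $\Phi_d(q)^{4}\mid(-q;q)_{n-1}^{4}$. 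Since $(-q;q)_{n-1}$ has no odd cyclotomic factor, it remains only to prove that $\Phi_d(q)\mid T_n(q)$ for every odd $d>1$ with $d\mid n$ or $n\bmod d>d/2$.

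\textbf{Step 3 (the odd cyclotomic factors).} If $n$ is odd and $d\mid n$, this is immediate from \eqref{eq:0phi-q}, which gives $[n]\mid T_n(q)$. For the remaining $d$ I would argue modulo $\Phi_d(q)$ in the manner of Ni and Pan. Write $c_k$ for the $k$-th summand of $T_n(q)$. A valuation count shows that $c_k\equiv0\pmod{\Phi_d(q)^{3}}$ whenever $k\bmod d\geqslant(d+1)/2$, so that modulo $\Phi_d(q)$ only the terms with $k\bmod d\leqslant(d-1)/2$ survive. Grouping these by $j=\lfloor k/d\rfloor$ and applying the Guo--Schlosser $q$-congruence to the block shift $k\mapsto k+d$ gives, for the surviving residues $s\in\{0,1,\dots,(d-1)/2\}$,
$$c_{jd+s}\equiv \binom{2j}{j}^{3}16^{-j}\,c_s\pmod{\Phi_d(q)}.$$
Hence, whenever the range $0\le k\le n-1$ ends with a complete block of such residues — exactly when $d\mid n$ or $n\bmod d>d/2$ — one gets
$$T_n(q)\equiv\Bigl(\,\sum_{j=0}^{\lfloor(n-1)/d\rfloor}\frac{\binom{2j}{j}^{3}}{16^{j}}\,\Bigr)\,T_{(d+1)/2}(q)\pmod{\Phi_d(q)},$$
and $\Phi_d(q)\mid T_n(q)$ follows from $\Phi_d(q)\mid T_{(d+1)/2}(q)$. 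I expect the main obstacle to be precisely this base relation $\Phi_{2m-1}(q)\mid T_m(q)$ for $m\geqslant2$ — the sub-case of \eqref{eq:conj-guo2} carrying the largest cyclotomic index — which is where the simple Guo--Schlosser $q$-congruence enters, together with the check that an incomplete final block genuinely fails to vanish (so that the hypothesis on $n\bmod d$ is used). Finally, $n=2$ is a direct verification ($T_2(q)=q^{-1}\Phi_3(q)$), and $q\to1$ returns the Mao--Zhang congruence of the corollary following \eqref{eq:conj-guo2}.
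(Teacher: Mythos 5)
Your route coincides with the paper's at the top level: the corollary is just the $q\to1$ case of \eqref{eq:conj-guo2}, and your Step 1 identity $S_n(q)=(-q;q)_{n-1}^4\,T_n(q)$, with $T_n(q)$ the left-hand side of \eqref{eq:0phi-q}, is exactly the reduction made in Section 3. However, two steps of your argument for \eqref{eq:conj-guo2} have genuine gaps. First, Step 2 rests on an invalid inference: from $\Phi_d(q)^4\mid(-q;q)_{n-1}^4$ you cannot deduce $\Phi_d(q)^3\mid S_n(q)$, because $T_n(q)$ is not $\Phi_d(q)$-integral for even $d$ --- its $k$-th summand has denominator $(q;q)_k^2(q^2;q^2)_k$, of $\Phi_d$-adic valuation $2\lfloor k/d\rfloor+\lfloor 2k/d\rfloor$, while the numerator $(q;q^2)_k^3$ contains no even cyclotomic factors at all. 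The inference really does fail: for $n=3$ one has $\Phi_2(q)^4\mid(-q;q)_2^4$, yet $S_3(-1)=-8\neq0$, so $\Phi_2(q)\nmid S_3(q)$ (the theorem survives only because $\Phi_2$ does not occur in the modulus for $n=3$). To repair this you must either argue term by term on $S_n(q)$ itself, as the paper does (for each even $d$ in the modulus and each $0\leqslant k\leqslant n-1$, one of $(-q^{k+1};q)_{n-k-1}$ and ${2k\brack k}$ is divisible by $\Phi_d(q)$), or do a genuine valuation count using $v_{\Phi_d}\bigl((-q;q)_{n-1}\bigr)=\lfloor(n-1)/d\rfloor+1$ (not merely $\geqslant1$) against $v_{\Phi_d}(c_k)\geqslant-4\lfloor(n-1)/d\rfloor-1$; only then does $v_{\Phi_d}(S_n)\geqslant3$ come out. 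Note also that the paper only ever needs multiplicity one for these $d$, since it presents the modulus as the least common multiple of $(1+q^{n-1})^3$ and $[n]{2n-1\brack n-1}$ rather than a full cyclotomic factorization.

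Second, Step 3 leaves the base case unproved and reaches for the wrong tool. You defer $\Phi_d(q)\mid T_{(d+1)/2}(q)$ to a hoped-for reflection argument via Lemma~\ref{lem:gs}, which you do not carry out (and which would also have to control $[3k+1]$, $q^{-\binom{k+1}{2}}$ and $(-q;q)_k^2$ under $k\mapsto(d-1)/2-k$). But this base case is immediate from a result you already invoked: \eqref{eq:0phi-q} with $n$ replaced by the odd integer $d$ gives $\sum_{k=0}^{d-1}c_k\equiv[d]\,q^{(1-d)/2}\equiv0\pmod{\Phi_d(q)}$, and your own observation that $c_k\equiv0\pmod{\Phi_d(q)^3}$ for $(d+1)/2\leqslant k\leqslant d-1$ truncates this to $\sum_{k=0}^{(d-1)/2}c_k\equiv0$. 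This is precisely how the paper verifies condition (iii) of Lemma~\ref{thm:NP}; Lemma~\ref{lem:gs} is used only for Theorem~\ref{conj:guo}, not in Section 3. (A minor point: your block-shift congruence $c_{jd+s}\equiv\binom{2j}{j}^3 16^{-j}c_s$ is correct, but it follows from Lemma~\ref{lem:NP} combined with Lemma~\ref{lem:sd+t}, not from the Guo--Schlosser lemma.)
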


We point out that some other interesting congruences and $q$-congruences can be found in
\cite{Guo2020A.2,Guo-rima,Guo-mod4,GS3,GS,GuoZu,HMZ,Liu,LW,LP,NP2,WY,WY0,Zudilin2}.

\section{Proof of Theorem \ref{conj:guo}}
We need the following lemma, which is a special case of \cite[Lemma 3.2]{NP} and can also be
deduced from the $q$-Lucas theorem (see \cite{Olive}).
\begin{lem}\label{lem:NP}
Let $s$ and $t$ be non-negative integers with $0\leqslant t\leqslant d-1$. Then
\begin{align*}
\frac{(q;q^2)_{sd+t}}{(q^2,q^2)_{sd+t}}
\equiv \frac{1}{4^s}{2s\choose s}
\frac{(q;q^2)_{t}}{(q,q^2)_{t}}
\pmod{\Phi_d(q)}.
\end{align*}
\end{lem}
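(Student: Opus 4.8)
\medskip
\noindent\emph{Proof strategy.} The plan is to deduce the statement from the $q$-Lucas theorem \cite{Olive}; indeed, after specialising parameters it is \cite[Lemma 3.2]{NP}. Throughout take $d$ to be odd, which is the case occurring in the applications and which is what makes the left-hand side $\Phi_d(q)$-integral. The starting point is the pair of elementary factorisations $(q;q)_{2m}=(q;q^2)_m(q^2;q^2)_m$ and $(q^2;q^2)_m=(q;q)_m(-q;q)_m$, which together give, for every $m\geqslant0$,
\[
\frac{(q;q^2)_m}{(q^2;q^2)_m}=\frac{(q;q)_{2m}}{(q^2;q^2)_m^2}=\frac{1}{(-q;q)_m^2}{2m\brack m}_q .
\]
So it suffices to evaluate ${2m\brack m}_q$ and $(-q;q)_m$ modulo $\Phi_d(q)$ for $m=sd+t$.

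For the $q$-binomial factor I would invoke the $q$-Lucas theorem. When $2t<d$, the base-$d$ expansions $2(sd+t)=(2s)d+2t$ and $sd+t=s\cdot d+t$ give
\[
{2(sd+t)\brack sd+t}_q\equiv{2s\choose s}{2t\brack t}_q\pmod{\Phi_d(q)},
\]
while when $d\leqslant2t$ a direct inspection (the lower $d$-ary digit of $2(sd+t)$ is then $2t-d<t$) shows that both ${2(sd+t)\brack sd+t}_q$ and ${2t\brack t}_q$ vanish modulo $\Phi_d(q)$, so the displayed congruence holds in every case. For the factor $(-q;q)_{sd+t}=\prod_{j=1}^{sd+t}(1+q^j)$ I would use $q^d\equiv1\pmod{\Phi_d(q)}$ to split the product into $s$ consecutive blocks of length $d$ and a tail of length $t$, giving $(-q;q)_{sd+t}\equiv(-q;q)_d^s(-q;q)_t$; here $(-q;q)_d=(1+q^d)(-q;q)_{d-1}\equiv2(-q;q)_{d-1}$, and $(-q;q)_{d-1}\equiv1$ because substituting $x=-1$ into $\prod_{l=1}^{d-1}(x-\zeta^l)=1+x+\cdots+x^{d-1}$ (with $\zeta$ a primitive $d$-th root of unity) gives $\prod_{l=1}^{d-1}(1+\zeta^l)=1$ once $d$ is odd. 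Hence $(-q;q)_{sd+t}\equiv2^s(-q;q)_t\pmod{\Phi_d(q)}$. Combining the two congruences,
\begin{align*}
\frac{(q;q^2)_{sd+t}}{(q^2;q^2)_{sd+t}}
&=\frac{{2(sd+t)\brack sd+t}_q}{(-q;q)_{sd+t}^2}
\equiv\frac{{2s\choose s}{2t\brack t}_q}{4^s(-q;q)_t^2}
=\frac{1}{4^s}{2s\choose s}\frac{(q;q^2)_t}{(q^2;q^2)_t}\pmod{\Phi_d(q)},
\end{align*}
the last equality being the factorisation identity again with $m=t$.

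No step above presents a genuine obstacle; the one point requiring care is that these are congruences between \emph{rational} functions, so one must check that the denominators are units modulo $\Phi_d(q)$. This is exactly where the oddness of $d$ enters: $\Phi_d(q)$ divides $1+q^j$ only if $\zeta^j=-1$ for a primitive $d$-th root of unity $\zeta$, which is impossible for odd $d$, so $(-q;q)_m$ is coprime to $\Phi_d(q)$ for every $m$ and all the divisions above are legitimate. (One can of course also simply cite \cite[Lemma 3.2]{NP} and omit the argument.)
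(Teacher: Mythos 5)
Your proposal is correct, and it is precisely the argument the paper alludes to but does not write out: the paper proves nothing here, merely citing \cite[Lemma 3.2]{NP} and remarking that the statement "can also be deduced from the $q$-Lucas theorem," which is exactly the deduction you carry out via $\frac{(q;q^2)_m}{(q^2;q^2)_m}=\frac{1}{(-q;q)_m^2}{2m\brack m}_q$, the $q$-Lucas theorem (including the correct observation that both sides vanish mod $\Phi_d(q)$ when $2t\geqslant d$), and the congruence $(-q;q)_{sd+t}\equiv 2^s(-q;q)_t\pmod{\Phi_d(q)}$. That last congruence is in fact stated and proved separately later in the paper as Lemma 3.1, by the same block-splitting argument you give, and your closing remark about the oddness of $d$ being what makes the denominators units modulo $\Phi_d(q)$ is the right point to flag, since the lemma as printed omits that hypothesis.
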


We adopt some notation similar to that used by Ni and Pan \cite{NP}. For any positive integer $n$, let
\begin{align*}
\mathcal{S}(n)=\left\{d\geqslant 3:  \text{$d$ is odd and}\ \left\lfloor\frac{n-\frac{d+1}{2}}{d}\right\rfloor
=\left\lfloor\frac{n}{d}\right\rfloor\right\},
\end{align*}
where $\left\lfloor x\right\rfloor$ denotes the greatest integer not exceeding $x$. It is easy to see that,
for any integer $d>2n-1$, the number $(d+1)/2$ is greater than $n$,
and so $d\notin\mathcal{S}(n)$. This means that $\mathcal{S}(n)$ only contains finite elements.
Let
\begin{align*}
A_n(q)=\prod_{d\in\mathcal{S}(n)}\Phi_d(q),\\[5pt]
C_n(q)=\prod_{{d\,|\,n,\,d>1}\atop\text{$d$ is odd}}\Phi_d(q),
\end{align*}
It is clear that, if $d\,|\, n$, then $d\notin \mathcal{S}(n)$.
Therefore, the polynomials $A_n(q)$ and $C_n(q)$ are relatively prime.

We now give the key lemma, which is similar to \cite[Theorem 2.1]{NP}.
\begin{lem} \label{thm:NPq2}
Let $\nu_0(q),\nu_1(q),\ldots$ be a sequence of rational functions in $q$
such that, for any positive odd integer $d>1$,
\begin{itemize}
\item[{\rm(i)}] $\nu_k(q)$ is $\Phi_d(q^2)$-integral for each $k\geqslant0$, i.e.,
the denominator of $\nu_k(q)$ is relatively prime to $\Phi_d(q^2)$;

\item[{\rm(ii)}] for any non-negative integers $s$ and $t$ with $ 0\leqslant t\leqslant d-1$,
\begin{align*}
\nu_{sd+t}(q)\equiv\mu_s(q)\nu_{t}(q)
\pmod{\Phi_d(q^2)},
\end{align*}
where $\mu_s(q)$ is a $\Phi_d(q^2)$-integral rational function only dependent on $s$;

\item[{\rm(iii)}]
\begin{align*}
\sum_{k=0}^{(d-1)/2}\frac{(q^2;q^4)_k}{(q^4;q^4)_k}\nu_k(q)\equiv 0 \pmod{\Phi_d(q^2)}.
\end{align*}
\end{itemize}
Then, for all positive integers $n$,
\begin{align}
\sum_{k=0}^{n-1}\frac{(q^2;q^4)_k}{(q^4;q^4)_k}\nu_k(q)\equiv 0
\pmod{A_n(q^2)C_n(q^2)}.  \label{eq:main-lem}
\end{align}
\end{lem}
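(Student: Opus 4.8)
The plan is to imitate the proof of \cite[Theorem~2.1]{NP}, run with $q$ replaced throughout by $q^2$. Put
\[
\Sigma_n:=\sum_{k=0}^{n-1}\frac{(q^2;q^4)_k}{(q^4;q^4)_k}\nu_k(q).
\]
Since $A_n(q^2)=\prod_{d\in\mathcal{S}(n)}\Phi_d(q^2)$ and $C_n(q^2)=\prod_{d\mid n,\,d>1,\,d\text{ odd}}\Phi_d(q^2)$, and since for odd $d>1$ one has the factorization $\Phi_d(q^2)=\Phi_d(q)\Phi_{2d}(q)$ into two distinct irreducible cyclotomic polynomials, the various $\Phi_d(q^2)$ occurring in $A_n(q^2)C_n(q^2)$ are pairwise coprime (the map $d\mapsto\{d,2d\}$ is injective on odd integers $>1$, and $d\mid n$ excludes $d\in\mathcal{S}(n)$). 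Hence it suffices to fix an odd $d>1$ with $d\mid n$ or $d\in\mathcal{S}(n)$ and prove $\Sigma_n\equiv0\pmod{\Phi_d(q^2)}$.

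The crux is the \emph{full-period vanishing}, valid for every odd $d>1$:
\[
\sum_{t=0}^{d-1}\frac{(q^2;q^4)_t}{(q^4;q^4)_t}\nu_t(q)\equiv0\pmod{\Phi_d(q^2)}.
\]
For $(d+1)/2\le t\le d-1$ the $t$-th term vanishes individually: the product $(q^2;q^4)_t=\prod_{j=0}^{t-1}(1-q^{2+4j})$ contains the factor $1-q^{2d}$ (the index $j=(d-1)/2$ lies in $\{0,\dots,t-1\}$), and $\Phi_d(q^2)\mid 1-q^{2d}$ because $\Phi_d(x)\mid x^d-1$; moreover $1/(q^4;q^4)_t$ is $\Phi_d(q^2)$-integral, since for $1\le j\le t\le d-1$ neither $\Phi_d(q)$ nor $\Phi_{2d}(q)$ divides $1-q^{4j}$ (either would force $d\mid j$), and $\nu_t(q)$ is $\Phi_d(q^2)$-integral by~(i). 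The remaining terms $0\le t\le(d-1)/2$ sum to $0$ by hypothesis~(iii).

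Now write $n=md+t_0$ with $0\le t_0\le d-1$; if $d\mid n$ then $t_0=0$, while if $d\in\mathcal{S}(n)$ the defining relation $\lfloor(n-(d+1)/2)/d\rfloor=\lfloor n/d\rfloor=m$ forces $(d+1)/2\le t_0\le d-1$. Grouping $0\le k\le n-1$ as $k=sd+t$ with $0\le t\le d-1$, applying Lemma~\ref{lem:NP} with $q$ replaced by $q^2$ (which gives $\frac{(q^2;q^4)_{sd+t}}{(q^4;q^4)_{sd+t}}\equiv\frac{1}{4^{s}}\binom{2s}{s}\frac{(q^2;q^4)_t}{(q^4;q^4)_t}\pmod{\Phi_d(q^2)}$) together with hypothesis~(ii) (which gives $\nu_{sd+t}(q)\equiv\mu_s(q)\nu_t(q)\pmod{\Phi_d(q^2)}$), and multiplying these $\Phi_d(q^2)$-integral congruences, we obtain
\begin{align*}
\Sigma_n\equiv{}&\sum_{s=0}^{m-1}\frac{1}{4^{s}}\binom{2s}{s}\mu_s(q)\sum_{t=0}^{d-1}\frac{(q^2;q^4)_t}{(q^4;q^4)_t}\nu_t(q)\\
&{}+\frac{1}{4^{m}}\binom{2m}{m}\mu_m(q)\sum_{t=0}^{t_0-1}\frac{(q^2;q^4)_t}{(q^4;q^4)_t}\nu_t(q)\pmod{\Phi_d(q^2)}.
\end{align*}
The first sum is $0$ by the full-period vanishing. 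For the second: if $d\mid n$ it is the empty sum; if $d\in\mathcal{S}(n)$ then $t_0-1\le d-2<d$, so the same dichotomy applies, the terms with $t\ge(d+1)/2$ vanishing individually, and — since $t_0\ge(d+1)/2$ forces $t_0-1\ge(d-1)/2$ — the remaining terms constitute exactly the sum $\sum_{t=0}^{(d-1)/2}$ of hypothesis~(iii). Thus the second sum is $\equiv0$ as well, and $\Sigma_n\equiv0\pmod{\Phi_d(q^2)}$, which completes the proof.

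I expect the main obstacle to be the full-period vanishing of the second paragraph, that is, boosting the half-period hypothesis~(iii) to a statement about the sum over a complete residue block: the mechanism is that $(q^2;q^4)_t$ acquires the $\Phi_d(q^2)$-divisible factor $1-q^{2d}$ as soon as $t\ge(d+1)/2$, and one must track $\Phi_d(q^2)$-integrality carefully — supplied by~(i) and Lemma~\ref{lem:NP} — in order to be allowed to multiply the two congruences. The role of the set $\mathcal{S}(n)$ is precisely to force $t_0\ge(d+1)/2$, so that the leftover tail still contains the entire range covered by~(iii) and is annihilated by the same argument.
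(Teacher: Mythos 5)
Your proof is correct and follows essentially the same route as the paper's: both reduce to a fixed $\Phi_d(q^2)$, use Lemma~\ref{lem:NP} with $q\mapsto q^2$ together with hypothesis (ii) to collapse each block of $d$ consecutive indices, and boost the half-period hypothesis (iii) to full-period vanishing via the individual vanishing of the terms with $(d+1)/2\leqslant t\leqslant d-1$. The only cosmetic difference is that the paper pads the final partial block up to a complete one, whereas you keep it partial and observe that it contains exactly the range of (iii) plus individually vanishing terms.
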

\pf Our proof is very similar to that of \cite[Theorem 2.1]{NP}.
In order to make the paper more readable, we provide it here. For $d\in\mathcal{S}(n)$, we can write $n=ud+v$ with $(d+1)/2\leqslant v\leqslant d-1$.
Thus, for any $v\leqslant t\leqslant d-1$, the expression $(q^2;q^4)_t/(q^4;q^4)_t$
is congruent to $0$ modulo $\Phi_d(q^2)$.
In view of Lemma~\ref{lem:NP}, we obtain
\begin{align*}
\frac{(q^2;q^4)_{ud+t}}{(q^4;q^4)_{ud+t}}\equiv0\pmod{\Phi_d(q^2)}.
\end{align*}
Applying Lemma \ref{lem:NP} again, we conclude that
\begin{align*}
\sum_{k=0}^{n-1}\frac{(q^2;q^4)_k}{(q^4;q^4)_k}\nu_k(q)
&\equiv
\sum_{s=0}^{u}\sum_{t=0}^{d-1}\frac{(q^2;q^4)_{sd+t}}{(q^4;q^4)_{sd+t}}\nu_{sd+t}(q)\\[5pt]
&\equiv\sum_{s=0}^u \frac{1}{4^s}{2s\choose s} \mu_s(q)
\sum_{t=0}^{d-1}\frac{(q^2;q^4)_{t}}{(q^4;q^4)_{t}}\nu_t(q)\equiv0
\pmod{\Phi_d(q^2)},
\end{align*}
where we have used the condition (iii) and the fact that $(q^2;q^4)_t \nu_t(q)/(q^4;q^4)_t$
is congruent to $0$ modulo $\Phi_d(q^2)$ for $(d+1)/2\leqslant t\leqslant d-1$.
This proves that \eqref{eq:main-lem} is true modulo $A_n(q^2)$.

On the other hand, for $d\,|\,n$, we assume that $u=n/d$. Still by Lemma~\ref{lem:NP}, we have
\begin{align*}
\sum_{k=0}^{n-1}\frac{(q^2;q^4)_k}{(q^4;q^4)_k}\nu_k(q)
&=
\sum_{s=0}^{u-1}\sum_{t=0}^{d-1}\frac{(q^2;q^4)_{sd+t}}{(q^4;q^4)_{sd+t}}\nu_{sd+t}(q)\\[5pt]
&\equiv\sum_{s=0}^{u-1}\frac{1}{4^s}{2s\choose s} \mu_s(q)
\sum_{t=0}^{d-1}\frac{(q^2;q^4)_{t}}{(q^4;q^4)_{t}}\nu_t(q)\\[5pt]
&\equiv0\pmod{\Phi_d(q^2)}.
\end{align*}
This proves that \eqref{eq:main-lem} is also true modulo $C_n(q^2)$.
\qed

We also require the following easily proved result, which is due to Guo and Schlosser \cite[Lemma 3.1]{GS}.
\begin{lem}\label{lem:gs}
Let $d$ be a positive odd integer.
Then, for $0\leqslant k\leqslant (d-1)/2$, we have
\begin{equation*}
\frac{(q;q^2)_{(d-1)/2-k}}{(q^2;q^2)_{(d-1)/2-k}}
\equiv (-1)^{(d-1)/2-2k}\frac{(q;q^2)_k}{(q^2;q^2)_k} q^{(d-1)^2/4+k}
\pmod{\Phi_d(q)}.
\end{equation*}
\end{lem}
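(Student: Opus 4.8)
The plan is to work modulo $\Phi_d(q)$, where $q$ behaves like a primitive $d$-th root of unity, so that $q^d\equiv 1$ and every factor $1-q^a$ with $1\leqslant a\leqslant d-1$ is a unit. Writing $m=(d-1)/2$, I first record two cosmetic simplifications of the statement: the sign $(-1)^{(d-1)/2-2k}$ equals $(-1)^m$ since $(-1)^{2k}=1$, and the exponent $(d-1)^2/4$ equals $m^2$. The entire argument rests on the elementary reflection congruence
\begin{equation*}
1-q^{d-a}\equiv -q^{-a}(1-q^a)\pmod{\Phi_d(q)},\qquad 1\leqslant a\leqslant d-1,
\end{equation*}
which follows at once from $q^{d-a}\equiv q^{-a}$ together with $1-q^{-a}=-q^{-a}(1-q^a)$. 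The key structural point is that, since $d$ is odd, the involution $a\mapsto d-a$ interchanges the odd residues and the even residues in $\{1,\dots,d-1\}$, and this is precisely the numerator/denominator dichotomy between $(q;q^2)_\bullet$ and $(q^2;q^2)_\bullet$.

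I would prove the congruence by induction on $k$, starting from $k=0$. For the base case I reflect the even product $(q^2;q^2)_m=\prod_{j=1}^m(1-q^{2j})$ factor by factor: each $2j$ equals $d-(2m+1-2j)$ with $2m+1-2j$ odd, and as $j$ runs through $1,\dots,m$ the exponents $2m+1-2j$ run through $2m-1,2m-3,\dots,1$. Applying the reflection congruence to every factor turns the product into $(-1)^m q^{-\sigma}(q;q^2)_m$, where $\sigma=\sum_{j=1}^m(2m+1-2j)=1+3+\cdots+(2m-1)=m^2$. Hence $(q^2;q^2)_m\equiv(-1)^m q^{-m^2}(q;q^2)_m$, which gives $(q;q^2)_m/(q^2;q^2)_m\equiv(-1)^m q^{m^2}$, exactly the $k=0$ instance.

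For the inductive step I would pass from $k$ to $k+1$ by tracking single factors, using
\begin{equation*}
\frac{(q;q^2)_{m-k-1}}{(q^2;q^2)_{m-k-1}}
=\frac{(q;q^2)_{m-k}}{(q^2;q^2)_{m-k}}\cdot\frac{1-q^{2(m-k)}}{1-q^{2(m-k)-1}},
\qquad
\frac{(q;q^2)_{k+1}}{(q^2;q^2)_{k+1}}
=\frac{(q;q^2)_{k}}{(q^2;q^2)_{k}}\cdot\frac{1-q^{2k+1}}{1-q^{2k+2}}.
\end{equation*}
Because $d$ is odd, one has the exact identities $2(m-k)=d-(2k+1)$ and $2(m-k)-1=d-(2k+2)$, so the reflection congruence applied to the numerator and denominator of the first correction factor yields
\begin{equation*}
\frac{1-q^{2(m-k)}}{1-q^{2(m-k)-1}}
\equiv\frac{-q^{-(2k+1)}(1-q^{2k+1})}{-q^{-(2k+2)}(1-q^{2k+2})}
=q\,\frac{1-q^{2k+1}}{1-q^{2k+2}}\pmod{\Phi_d(q)}.
\end{equation*}
Multiplying the inductive hypothesis $(q;q^2)_{m-k}/(q^2;q^2)_{m-k}\equiv(-1)^m q^{m^2+k}(q;q^2)_k/(q^2;q^2)_k$ by this relation produces precisely the $k+1$ case, the lone surviving factor of $q$ accounting for the increase of the exponent from $m^2+k$ to $m^2+k+1$ (the two minus signs cancel, so the sign stays $(-1)^m$). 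For $0\leqslant k\leqslant m-1$ all four exponents $2k+1,2k+2,2(m-k)-1,2(m-k)$ lie in $[1,d-1]$, so every factor is a unit modulo $\Phi_d(q)$ and the divisions are legitimate.

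The computation is short, and the only real care needed is the bookkeeping of the $q$-power: the exponent $m^2=(d-1)^2/4$ arises as the sum $1+3+\cdots+(2m-1)$ of the reflected odd exponents in the base case, and I expect this exponent-tracking, rather than any structural difficulty, to be the main thing to get right. A fully direct, non-inductive variant is also available—reflecting $(q;q^2)_{m-k}$ and $(q^2;q^2)_{m-k}$ separately and invoking the base case once—but there one must additionally observe that $q^{md}\equiv 1$ to reconcile the resulting power of $q$ with $q^{m^2+k}$; the inductive route sidesteps this and gives the cleaner write-up.
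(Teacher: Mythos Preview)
Your argument is correct: the reflection identity $1-q^{d-a}\equiv -q^{-a}(1-q^a)\pmod{\Phi_d(q)}$ together with the observation that $a\mapsto d-a$ swaps odd and even residues (hence numerator and denominator factors) yields the base case with the exponent $m^2=1+3+\cdots+(2m-1)$, and the inductive step is the clean single-factor computation you wrote. The bookkeeping of signs, $q$-powers, and unit factors is all accurate.

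There is, however, nothing to compare against in the paper itself: the authors do not prove this lemma but simply import it as \cite[Lemma 3.1]{GS}, calling it an ``easily proved result.'' Your write-up supplies exactly the elementary verification that justifies that phrase, and in fact the reflection-and-induction route you take is essentially the standard way this identity is established in the cited source.
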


In order to simplify the proof of Theorem~\ref{conj:guo}, we need to present the following result.
\begin{thm}\label{conj:guo-new}
Let $n\geqslant 2$, $r\geqslant 1$ and $s\geqslant 0$ be integers. Then
\begin{align}
\sum_{k=0}^{n-1}(-1)^k q^{2k^2+(2r-4s-4)k}[4k+1]^{2s}[4k+1]_{q^2}
\frac{(q^2;q^4)_k^{2r-1}}{(q^4;q^4)_k^{2r-1}}
\equiv 0 \pmod{A_n(q^2)C_n(q^2)}, \label{eq:guo1-new}\\[5pt]
\sum_{k=0}^{n-1}q^{(2r-4s-4)k}[4k+1]^{2s}[4k+1]_{q^2}
\frac{(q^2;q^4)_k^{2r}}{(q^4;q^4)_k^{2r}}
\equiv 0 \pmod{A_n(q^2)C_n(q^2)}. \label{eq:guo2-new}
\end{align}
\end{thm}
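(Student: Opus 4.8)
The plan is to deduce Theorem~\ref{conj:guo-new} by verifying the three hypotheses of Lemma~\ref{thm:NPq2} for a suitably chosen sequence $\nu_k(q)$. Writing the summands of \eqref{eq:guo1-new} and \eqref{eq:guo2-new} in the form $\frac{(q^2;q^4)_k}{(q^4;q^4)_k}\nu_k(q)$, we set
\begin{align*}
\nu_k(q)=(-1)^k q^{2k^2+(2r-4s-4)k}[4k+1]^{2s}[4k+1]_{q^2}\frac{(q^2;q^4)_k^{2r-2}}{(q^4;q^4)_k^{2r-2}}
\end{align*}
in the first case, and similarly with exponent $2r-1$ in the $q$-shifted factorials (and no sign $(-1)^k$) in the second. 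Condition (i), $\Phi_d(q^2)$-integrality, is immediate since the denominator $(q^4;q^4)_k$ has factors $1-q^{4j}$, and for $j<d$ these are coprime to $\Phi_d(q^2)$ (here one uses that $d$ is odd, so $\Phi_d(q^2)=\Phi_d(q)\Phi_d(-q)$ up to the usual identification, and neither factor divides $1-q^{4j}$ for $0<j<d$); for $k\geqslant d$ the factor $\frac{(q^2;q^4)_k}{(q^4;q^4)_k}$ already vanishes mod $\Phi_d(q^2)$, so only small $k$ matter. Condition (ii) is where Lemma~\ref{lem:NP} does the work: applying it with base $q^2$ (so the lemma's $q$ becomes $q^2$, $(q^2;q^4)_{sd+t}/(q^4;q^4)_{sd+t}\equiv \frac{1}{4^s}\binom{2s}{s}(q^2;q^4)_t/(q^4;q^4)_t$) one reads off that $\nu_{sd+t}(q)\equiv\mu_s(q)\,\nu_t(q)$, with $\mu_s(q)$ collecting the powers of $\left(\frac{1}{4^s}\binom{2s}{s}\right)$ together with the extra powers of $q$ and of $[4k+1]$ that arise from $k=sd+t$; here one uses $q^{2d}\equiv 1$, $[4(sd+t)+1]\equiv[4t+1]$ and $[4(sd+t)+1]_{q^2}\equiv[4t+1]_{q^2}\pmod{\Phi_d(q^2)}$ (the last two because $4t+1$ and $4(sd+t)+1$ differ by a multiple of $d$), so that indeed $\mu_s$ depends only on $s$.

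The crux is condition (iii): the finite sum
\begin{align*}
\sum_{k=0}^{(d-1)/2}\frac{(q^2;q^4)_k}{(q^4;q^4)_k}\,\nu_k(q)\equiv 0\pmod{\Phi_d(q^2)}.
\end{align*}
This is where Lemma~\ref{lem:gs} enters, exactly in the spirit of Guo and Schlosser's argument: one pairs the term indexed by $k$ with the term indexed by $(d-1)/2-k$. Using Lemma~\ref{lem:gs} with base $q^2$ (replacing $q$ by $q^2$, $\Phi_d(q)$ by $\Phi_d(q^2)$), the ratio $\frac{(q^2;q^4)_{(d-1)/2-k}}{(q^4;q^4)_{(d-1)/2-k}}$ is congruent to an explicit monomial multiple of $\frac{(q^2;q^4)_k}{(q^4;q^4)_k}$, so each summand at $(d-1)/2-k$ is an explicit monomial (and a power of $[4((d-1)/2-k)+1]$-type factor) times the summand at $k$. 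The remaining task is a bookkeeping identity: show that the two paired terms cancel modulo $\Phi_d(q^2)$. For this one needs $[4((d-1)/2-k)+1]=[2d-1-4k]\equiv -q^{-1-4k}[4k+1]$ and the $q^2$-analogue $[2d-1-4k]_{q^2}\equiv -q^{-2-8k}[4k+1]_{q^2}\pmod{\Phi_d(q^2)}$ (both from $q^{2d}\equiv1$), and then verify that the accumulated powers of $q$ coming from the $2k^2+(2r-4s-4)k$ exponent, from the $2r-1$ (resp. $2r$) copies of the Guo--Schlosser monomial, and from these $[4k+1]$-reflections, add up so that the $(d-1)/2-k$ term is exactly $-1$ times the $k$ term. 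The middle term $k=(d-1)/2$ (when it is its own partner, which happens only if $d\equiv 1\pmod 4$... actually $k=(d-1)/2-k$ forces $d\equiv 1\pmod 4$ and $k=(d-1)/4$) must be checked to vanish on its own; here the factor $[4k+1]=[d]\equiv 0\pmod{\Phi_d(q)}$, hence mod $\Phi_d(q^2)$, kills it.

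I expect the main obstacle to be precisely the exponent arithmetic in condition (iii): tracking the net power of $q$ through $2r-1$ (or $2r$) applications of Lemma~\ref{lem:gs} together with the quadratic exponent $2k^2$ and the two reflected $q$-integers, and confirming that everything telescopes to an overall $-1$. This is a finite but delicate computation, sensitive to the parity $(-1)^k$ versus its absence and to the distinction between $[\,\cdot\,]_q$ and $[\,\cdot\,]_{q^2}$; the parameter $s$ enters only through the exponent shift $-4sk$ and through $[4k+1]^{2s}$, both of which reflect cleanly, so $s$ should not cause real trouble. Once conditions (i)--(iii) are in place, Lemma~\ref{thm:NPq2} immediately yields \eqref{eq:guo1-new} and \eqref{eq:guo2-new}, completing the proof of Theorem~\ref{conj:guo-new}.
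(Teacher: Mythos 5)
Your proposal is correct and follows essentially the same route as the paper: the same choice of $\nu_k(q)$, Lemma~\ref{lem:NP} with $q\mapsto q^2$ for condition (ii), and the Guo--Schlosser reflection $k\mapsto (d-1)/2-k$ via Lemma~\ref{lem:gs} with $q\mapsto q^2$ for condition (iii). One small correction: for the self-paired middle term $k=(d-1)/4$ the vanishing should be attributed to $[4k+1]_{q^2}=[d]_{q^2}\equiv 0\pmod{\Phi_d(q^2)}$, not to $[d]_q\equiv 0\pmod{\Phi_d(q)}$ (divisibility by $\Phi_d(q)$ does not imply divisibility by $\Phi_d(q^2)=\Phi_d(q)\Phi_{2d}(q)$).
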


\begin{proof}We only give a proof of \eqref{eq:guo1-new}, since the proof of \eqref{eq:guo2-new}
is exactly the same as that of \eqref{eq:guo1-new}.
For any non-negative integer $k$, let
\begin{align*}
\nu_{k}(q)=(-1)^k q^{2k^2+(2r-4s-4)k}[4k+1]^{2s}[4k+1]_{q^2}
\frac{(q^2;q^4)_k^{2r-2}}{(q^4;q^4)_k^{2r-2}}.
\end{align*}
Then, for any odd $d$, the rational function $\nu_{k}(q)$ is $\Phi_d(q^2)$-integral,
since
$$
\frac{(q^2;q^4)_k}{(q^4;q^4)_k}={2k\brack k}_{q^2}\frac{1}{(-q^2;q^2)_k^2},
$$
and $(-q^2;q^2)_k$ is relatively prime to $\Phi_d(q^2)$. Using $q^{2d}\equiv 1\pmod{\Phi_d(q^2)}$ and applying Lemma \ref{lem:NP} with $q\mapsto q^2$, we have
\begin{align*}
\nu_{sd+t}(q)&= (-1)^{sd+t}q^{2(sd+t)^2+(2r-4s-4)(sd+t)}[4(sd+t)+1]^{2s}[4(sd+t)+1]_{q^2}
\frac{(q^2;q^4)_{sd+t}^{2r-2}}{(q^4;q^4)_{sd+t}^{2r-2}}\\
&\equiv (-1)^s\frac{1}{4^{(2r-2)s}}{2s\choose s}^{2r-2} \nu_{t}(q)
\pmod{\Phi_d(q^2)},
\end{align*}
for non-negative integers $s$ and $t$ with $0\leqslant t\leqslant d-1$. Thus, the sequence $\nu_0(q),\nu_1(q),\ldots$ satisfies the requirements (i) and (ii) of Lemma~\ref{thm:NPq2}.

We now verify the requirement (iii) of Lemma~\ref{thm:NPq2}, i.e.,
\begin{equation}
\sum_{k=0}^{(d-1)/2}(-1)^k q^{2k^2+(2r-4s-4)k}[4k+1]^{2s}[4k+1]_{q^2}
\frac{(q^2;q^4)_k^{2r-1}}{(q^4;q^4)_k^{2r-1}}
\equiv 0 \pmod{\Phi_d(q^2)}.  \label{eq:sum-to-0}
\end{equation}
In fact, by Lemma~\ref{lem:gs} with $q\mapsto q^2$, it is easy to verify that, for $0\leqslant k\leqslant (d-1)/2$,
the $k$-th and $((d-1)/2-k)$-th terms on the left-hand side of \eqref{eq:sum-to-0}
cancel each other modulo $\Phi_d(q^2)$, i.e.,
\begin{align*}
&(-1)^{(d-1)/2-k} q^{2((d-1)/2-k)^2+(2r-4s-4)((d-1)/2-k)}\\
&\times [2d-4k-1]^{2s}[2d-4k-1]_{q^2}\frac{(q^2;q^4)_{(d-1)/2-k}^{2r-1}}{(q^4;q^4)_{(d-1)/2-k}^{2r-1}} \notag\\[5pt]
&\quad\equiv -(-1)^k q^{2k^2+(2r-4s-4)k}[4k+1]^{2s}[4k+1]_{q^2}
\frac{(q^2;q^4)_k^{2r-1}}{(q^4;q^4)_k^{2r-1}}
\pmod{\Phi_d(q^2)}.
\end{align*}
This proves \eqref{eq:sum-to-0}. By Lemma~\ref{thm:NPq2}, we conclude that \eqref{eq:guo1-new} is true modulo $A_n(q^2)C_n(q^2)$.
\end{proof}

We collected adequate ingredients and are able to prove Theorem~\ref{conj:guo}.

\begin{proof}[Proof of Theorem {\rm\ref{conj:guo}}]
For $0\leqslant k \leqslant n-2$, the $q$-factorial $(-q^{2k+2};q^2)_{n-k-1}$ contains the factor $1+q^{2n-2}$, and
for $k=n-1$, we have
\begin{align*}
{2k\brack k}_{q^2}={2n-2\brack n-1}_{q^2}=(1+q^{2n-2}){2n-3\brack n-2}_{q^2}.
\end{align*}
Therefore, the left-hand sides of \eqref{eq:guo1} and \eqref{eq:guo2} are both divisible by $(1+q^{2n-2})^{2r-1}$.

In what follows, we shall prove that the left-hand sides of \eqref{eq:guo1} and \eqref{eq:guo2} are both divisible by $[n]_{q^2}{2n-1\brack n-1}_{q^2}$.
It is easy to see that the $q$-binomial coefficient ${n\brack k}_q$ has the the following factorization (see \cite[Lemma 1]{CH}):
\begin{align*}
{n\brack k}_{q}=\prod_{{d\in\mathcal{D}_{n,k}}}\Phi_d(q),
\end{align*}
where
\begin{align*}
\mathcal{D}_{n,k}:=\left\{d\geqslant 2:
\left\lfloor\frac{k}{d}\right\rfloor+\left\lfloor\frac{n-k}{d}\right\rfloor<\left\lfloor\frac{n}{d}\right\rfloor\right\}.
\end{align*}
It is easily seen that $1<d\in \mathcal{D}_{2n-1,n-1}$
is odd if and only $d\in \mathcal{S}(n)$, and so
\begin{align}
[n]_{q^2}{2n-1\brack n-1}_{q^2}
=A_n(q^2)C_n(q^2)\prod_{{d\,|\,n}\atop{d\geqslant2 \text{~is even}}}\Phi_d(q^2)
\cdot\prod_{{d\in\mathcal{D}_{2n-1,n-1}}\atop{d\text{~is even}}}\Phi_d(q^2). \label{eq:factor}
\end{align}

Note that
\begin{align*}
{2k\brack k}_{q^2}(-q^{2k+2};q^2)_{n-k-1}^2=\frac{(q^2;q^4)_k}{(q^4;q^4)_k}(-q^2;q^2)_{n-1}^2.
\end{align*}
By Theorem \ref{conj:guo-new}, the left-hand sides of \eqref{eq:guo1} and \eqref{eq:guo2} are both divisible by $A_n(q^2)C_n(q^2)$.

It remains to show that \eqref{eq:guo1} and \eqref{eq:guo2} also hold modulo
\begin{align*}
\prod_{{d\,|\,n}\atop{d\geqslant2 \text{~is even}}}\Phi_d(q^2)
\cdot\prod_{{d\in\mathcal{D}_{2n-1,n-1}}\atop{d\text{~is even}}}\Phi_d(q^2).
\end{align*}
Firstly, let $d\,|\,n$ be an even integer. Then
\begin{align*}
1+q^d=\frac{1-q^{2d}}{1-q^d}\equiv0\pmod{\Phi_d(q^2)}.
\end{align*}
It follows that, for $0\leqslant k< n-d/2$, the $q$-shifted factorial $(-q^{2k+2};q^2)_{n-k-1}$
contains the factor $1+q^{2n-d}$ and is therefore congruent to $0$ modulo $\Phi_d(q^2)$.
On the other hand, for $n-d/2\leqslant k\leqslant n-1$,
we have $d\in\mathcal{D}_{2k,k}$, i.e.,
\begin{align}
{2k\brack k}_{q^2}\equiv0 \pmod{\Phi_d(q^2)}.\label{eq:2kkq2}
\end{align}
Hence, for $0\leqslant k\leqslant n-1$, we always have
\begin{align}
{2k\brack k}_{q^2}(-q^{2k+2};q^2)_{n-k-1}\equiv0 \pmod{\Phi_d(q^2)}.  \label{eq:2kkq3}
\end{align}
This proves that \eqref{eq:guo1} and \eqref{eq:guo2} are true modulo $\prod_{{d\,|\,n}\atop{d\geqslant2 \text{~is even}}}\Phi_d(q^2)$.
Secondly, we consider the case where $d\in\mathcal{D}_{2n-1,n-1}$ is even.
Write $n=ud+v$ with $0\leqslant v\leqslant d-1$. Then $v>d/2$.
Thus, for $0\leqslant k< ud+d/2$, the polynomial $(-q^{2k+2};q^2)_{n-k-1}$
has the factor $1+q^{2ud+d}$ and is divisible by $\Phi_d(q^2)$.
Moreover, for $ud+d/2\leqslant k\leqslant n-1$, we have $d\in\mathcal{D}_{2k,k}$,
and so \eqref{eq:2kkq2} holds.
Therefore, for $0\leqslant k\leqslant n-1$, the $q$-congruence \eqref{eq:2kkq3} also holds in this case.
This proves that \eqref{eq:guo1} and \eqref{eq:guo2} are true modulo $\prod_{{d\in\mathcal{D}_{2n-1,n-1}}\atop{d\text{~is even}}}\Phi_d(q^2)$.

Noticing that
\begin{align*}
[n]_{q^2}{2n-1\brack n-1}_{q^2}
=(1+q^{2n-2})[2n-1]_{q^2}{2n-3\brack n-2}_{q^2},
\end{align*}
and $(1+q^{2n-2})$ is relatively prime to $[2n-1]_{q^2}{2n-3\brack n-2}_{q^2}$, the least common multiple
of $(1+q^{2n-2})^{2r-1}$ and $[n]_{q^2}{2n-1\brack n-1}_{q^2}$ is just $(1+q^{2n-2})^{2r-2}[n]_{q^2}{2n-1\brack n-1}_{q^2}$.
This completes the proof of the theorem.
\end{proof}

\section{Proof of Theorem \ref{conj:guo2}}

We first give the following result.
\begin{lem}\label{lem:sd+t}Let $d$ be a positive odd integer. Let $s$ and $t$ be non-negative integers with $0\leqslant t\leqslant d-1$. Then
\begin{align*}
(-q;q)_{sd+t}\equiv2^s(-q;q)_t \pmod{\Phi_d(q)}.
\end{align*}
\end{lem}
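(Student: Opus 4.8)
The plan is to prove $(-q;q)_{sd+t}\equiv 2^s(-q;q)_t\pmod{\Phi_d(q)}$ by splitting the product $(-q;q)_{sd+t}=(1+q)(1+q^2)\cdots(1+q^{sd+t})$ into the "head" $\prod_{j=1}^{sd}(1+q^j)$ and the "tail" $\prod_{j=sd+1}^{sd+t}(1+q^j)$. Since $q^d\equiv\zeta^d$-type relations hold modulo $\Phi_d(q)$ — more precisely $q^{sd}\equiv 1\pmod{\Phi_d(q)}$ because $\Phi_d(q)\mid q^d-1$ and $d$ is odd — each factor in the tail satisfies $1+q^{sd+i}\equiv 1+q^i\pmod{\Phi_d(q)}$ for $1\leqslant i\leqslant t$, so the tail is congruent to $(-q;q)_t$. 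It then remains to show the head satisfies $\prod_{j=1}^{sd}(1+q^j)\equiv 2^s\pmod{\Phi_d(q)}$.

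For the head, I would group the $sd$ factors into $s$ consecutive blocks of length $d$, the $m$-th block being $\prod_{j=(m-1)d+1}^{md}(1+q^j)$. Reducing exponents modulo $d$ (again using $q^d\equiv 1\pmod{\Phi_d(q)}$), each block is congruent to the same quantity $P_d(q):=\prod_{i=1}^{d}(1+q^i)\pmod{\Phi_d(q)}$, where one of the factors is $1+q^d\equiv 2$. So it suffices to establish that $P_d(q)\equiv 2\pmod{\Phi_d(q)}$, equivalently $\prod_{i=1}^{d-1}(1+q^i)\equiv 1\pmod{\Phi_d(q)}$. This last identity follows from a standard evaluation: $\prod_{i=1}^{d-1}(x-q^i)=\frac{x^d-1}{x-1}=1+x+\cdots+x^{d-1}$ when $q$ is a primitive $d$-th root of unity, and substituting $x=-1$ gives $\prod_{i=1}^{d-1}(-1-q^i)=1-1+1-\cdots+1=1$ since $d$ is odd (the alternating sum of $d$ ones). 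As $\prod_{i=1}^{d-1}(-1-q^i)=(-1)^{d-1}\prod_{i=1}^{d-1}(1+q^i)=\prod_{i=1}^{d-1}(1+q^i)$ because $d-1$ is even, we get $\prod_{i=1}^{d-1}(1+q^i)\equiv 1\pmod{\Phi_d(q)}$, as desired. Combining the head and tail contributions yields $(-q;q)_{sd+t}\equiv 2^s\cdot(-q;q)_t\pmod{\Phi_d(q)}$.

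The only genuinely delicate point is being careful that the congruences respect the polynomial structure: since $\Phi_d(q)$ is the minimal polynomial of a primitive $d$-th root of unity over $\mathbb{Q}$ and all quantities in sight are ordinary polynomials in $q$ with integer coefficients, "$\equiv\pmod{\Phi_d(q)}$" behaves like equality in the ring $\mathbb{Z}[q]/(\Phi_d(q))$, so the substitution-at-a-root argument for $\prod_{i=1}^{d-1}(1+q^i)\equiv 1$ is legitimate. I expect this bookkeeping to be the main (and only) obstacle; the rest is routine manipulation of finite products. Alternatively, one could obtain the head evaluation directly from Lemma~\ref{lem:NP} by taking $r$-appropriate specializations, but the elementary root-of-unity computation above is cleaner and self-contained.
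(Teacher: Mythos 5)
Your proof is correct and follows essentially the same route as the paper: both split $(-q;q)_{sd+t}$ into $s$ blocks of length $d$ (each congruent to $(-q;q)_d\equiv 2$ modulo $\Phi_d(q)$) times a tail congruent to $(-q;q)_t$. The only difference is that you give a self-contained root-of-unity proof of the key sub-fact $\prod_{i=1}^{d-1}(1+q^i)\equiv 1\pmod{\Phi_d(q)}$, which the paper simply cites from \cite[Lemma 3.2]{GW}; your evaluation of $\prod_{i=1}^{d-1}(x-q^i)=(x^d-1)/(x-1)$ at $x=-1$ is a valid and clean way to obtain it.
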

\pf It is easy to see that
$(-q;q)_{d-1}\equiv 1\pmod{\Phi_d(q)}$ (see, for example, \cite[Lemma 3.2]{GW}).
Hence, for $0\leqslant t\leqslant d-1$, we have
\begin{align*}
(-q;q)_{sd+t}&=(-q^{sd+1};q)_t\prod_{j=0}^{s-1}(-q^{jd+1};q)_d\\[5pt]
&\equiv(-q;q)_t(-q;q)_d^s\\[5pt]
&\equiv2^s(-q;q)_t \pmod{\Phi_d(q)},
\end{align*}
where we have used $q^d\equiv 1\pmod{\Phi_d(q)}$.
\qed

We also need the following result, which is similar to Lemma \ref{thm:NPq2} and is a special case of \cite[Theorem 2.1]{NP}.
\begin{lem}\label{thm:NP}
Let $\nu_0(q),\nu_1(q),\ldots$ be a sequence of rational functions in $q$
such that, for any positive odd integer $d>1$,
\begin{itemize}
\item[{\rm(i)}] $\nu_k(q)$ is $\Phi_d(q)$-integral for each $k\geqslant0$, i.e.,
the denominator of $\nu_k(q)$ is relatively prime to $\Phi_d(q)$;

\item[{\rm(ii)}] for any non-negative integers $s$ and $t$ with $ 0\leqslant t\leqslant d-1$,
\begin{align*}
\nu_{sd+t}(q)\equiv\mu_s(q)\nu_{t}(q)
\pmod{\Phi_d(q)},
\end{align*}
where $\mu_s(q)$ is a $\Phi_d(q)$-integral rational function only dependent on $s$;

\item[{\rm(iii)}]
\begin{align*}
\sum_{k=0}^{d-1}\frac{(q;q^2)_k}{(q^2;q^2)_k}\nu_k(q)\equiv 0 \pmod{\Phi_d(q)}.
\end{align*}
\end{itemize}
Then, for all positive integers $n$,
\begin{align}
\sum_{k=0}^{n-1}\frac{(q;q^2)_k}{(q^2;q^2)_k}\nu_k(q)\equiv 0
\pmod{A_n(q)C_n(q)}.  \label{eq:main-lem}
\end{align}
\end{lem}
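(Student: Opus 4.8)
The plan is to follow the proof of Lemma~\ref{thm:NPq2} essentially verbatim, with $q^2$ replaced by $q$ throughout; indeed Lemma~\ref{thm:NP} is precisely the $q^2\mapsto q$ specialization of that argument, so the only content is to check that its two inputs — the $q$-Lucas reduction supplied by Lemma~\ref{lem:NP} and the vanishing of the ``tail'' terms — survive this change.

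First I would record the elementary fact that, for an odd integer $d>1$, the quotient $(q;q^2)_k/(q^2;q^2)_k$ is divisible by $\Phi_d(q)$ whenever $k\geqslant(d+1)/2$: the numerator $\prod_{j=1}^{k}(1-q^{2j-1})$ acquires the factor $1-q^d$ (hence $\Phi_d(q)$) at $j=(d+1)/2$, while for $k\leqslant d-1$ the denominator $\prod_{j=1}^{k}(1-q^{2j})$ is coprime to $\Phi_d(q)$. Combined with hypothesis~(i), this shows that the summand $\frac{(q;q^2)_k}{(q^2;q^2)_k}\nu_k(q)$ is $\equiv 0\pmod{\Phi_d(q)}$ for $(d+1)/2\leqslant k\leqslant d-1$, so that the sum in~(iii) may be taken up to $k=d-1$ or up to $k=(d-1)/2$ interchangeably.

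For the factor $A_n(q)$: given $d\in\mathcal{S}(n)$, write $n=ud+v$ with $(d+1)/2\leqslant v\leqslant d-1$, which is possible exactly because $d\in\mathcal{S}(n)$. Padding the sum in~\eqref{eq:main-lem} up to $k=(u+1)d-1$ only adds terms of index $ud+t$ with $v\leqslant t\leqslant d-1$; by the previous paragraph together with Lemma~\ref{lem:NP} these are $\equiv 0\pmod{\Phi_d(q)}$. Splitting $k=sd+t$ with $0\leqslant s\leqslant u$ and $0\leqslant t\leqslant d-1$, applying Lemma~\ref{lem:NP} to $(q;q^2)_{sd+t}/(q^2;q^2)_{sd+t}$ and hypothesis~(ii) to $\nu_{sd+t}(q)$, we get
\begin{align*}
\sum_{k=0}^{n-1}\frac{(q;q^2)_k}{(q^2;q^2)_k}\nu_k(q)
&\equiv\sum_{s=0}^{u}\sum_{t=0}^{d-1}\frac{(q;q^2)_{sd+t}}{(q^2;q^2)_{sd+t}}\nu_{sd+t}(q)\\
&\equiv\sum_{s=0}^{u}\frac{1}{4^s}{2s\choose s}\mu_s(q)\sum_{t=0}^{d-1}\frac{(q;q^2)_t}{(q^2;q^2)_t}\nu_t(q)\equiv 0\pmod{\Phi_d(q)},
\end{align*}
the last step by~(iii). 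Since $A_n(q)=\prod_{d\in\mathcal{S}(n)}\Phi_d(q)$ is a product of distinct cyclotomic polynomials, \eqref{eq:main-lem} holds modulo $A_n(q)$. The argument modulo $C_n(q)$ is the same but simpler: for odd $d\mid n$ with $d>1$ set $u=n/d$, so that $n=ud$ and no padding is needed; the identical splitting together with Lemma~\ref{lem:NP}, (ii) and~(iii) yields the congruence modulo each $\Phi_d(q)$ dividing $C_n(q)$. Finally, $A_n(q)$ and $C_n(q)$ are relatively prime (as recorded in the text) and squarefree, so combining the two conclusions gives \eqref{eq:main-lem} modulo $A_n(q)C_n(q)$.

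I do not anticipate a genuine obstacle, since every step transfers unchanged from the proof of Lemma~\ref{thm:NPq2}. The one point deserving a moment's care is the bookkeeping in the $A_n(q)$ part: verifying that the padded block $s=u$ contributes only terms that really are $0\bmod\Phi_d(q)$, which is exactly where the defining condition $\lfloor(n-\tfrac{d+1}{2})/d\rfloor=\lfloor n/d\rfloor$ of $\mathcal{S}(n)$ — equivalently $v\geqslant(d+1)/2$ in the decomposition $n=ud+v$ — is used.
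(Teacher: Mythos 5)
Your proposal is correct and follows essentially the same route the paper takes: the paper gives no separate proof of Lemma~\ref{thm:NP}, noting only that it is a special case of Ni and Pan's Theorem~2.1 and parallel to Lemma~\ref{thm:NPq2}, and your argument is precisely that proof transferred from $\Phi_d(q^2)$ to $\Phi_d(q)$. Your extra observation reconciling the upper limit $d-1$ in hypothesis~(iii) here with the upper limit $(d-1)/2$ in Lemma~\ref{thm:NPq2} (the terms with $(d+1)/2\leqslant k\leqslant d-1$ already vanish modulo $\Phi_d(q)$) is a correct and worthwhile detail.
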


We can now prove  Theorem \ref{conj:guo2}.
\begin{proof}[Proof of Theorem {\rm\ref{conj:guo2}}]
Similarly as before, for $0\leqslant k \leqslant n-2$, we have
\begin{align*}
(-q^{k+1};q)_{n-k-1}
\equiv 0 \pmod{1+q^{n-1}},
\end{align*}
and for $k=n-1$, there holds
$
{2k\brack k}_{q}\equiv 0
\pmod{1+q^{n-1}}.
$
This means that the left-hand side of \eqref{eq:conj-guo2} is divisible by $(1+q^{n-1})^3$.

In what follows, we shall prove that the left-hand side of \eqref{eq:conj-guo2} is divisible by $[n]{2n-1\brack n-1}$.
Letting $q^2\mapsto q$ in \eqref{eq:factor}, we have
\begin{align*}
[n]{2n-1\brack n-1}
=A_n(q)C_n(q)\prod_{{d\,|\,n}\atop{d\geqslant2 \text{~is even}}}\Phi_d(q)
\cdot\prod_{{d\in\mathcal{D}_{2n-1,n-1}}\atop{d\text{~is even}}}\Phi_d(q).
\end{align*}
For any non-negative integer $k$, let
\begin{align*}
\nu_k(q)=q^{-{k+1\choose 2}}[3k+1]\frac{(q;q^2)_{k}^2(-q;q)_k^2}{(q^2;q^2)_{k}^2}.
\end{align*}
Then, applying Lemmas \ref{lem:NP} and \ref{lem:sd+t}, for any positive odd integer $d$ and non-negative integers $s$ and $t$ with $0\leqslant t\leqslant d-1$, we have
\begin{align*}
\nu_{sd+t}(q)&=q^{-{sd+t+1\choose 2}}[3(sd+t)+1]\frac{(q;q^2)_{sd+t}^2(-q;q)_{sd+t}^2}{(q^2;q^2)_{sd+t}^2}\\
&\equiv \frac{1}{4^s}{2s\choose s}^2\nu_t(q)
\pmod{\Phi_d(q)}.
\end{align*}
Moreover, by \eqref{eq:0phi-q}, we obtain
\begin{align*}
\sum_{k=0}^{d-1}q^{-{k+1\choose 2}}[3k+1]\frac{(q;q^2)_{k}^3(-q;q)_k^2}{(q^2;q^2)_{k}^3}
\equiv 0\pmod{\Phi_d(q)}.
\end{align*}
Thus, we may apply Lemma \ref{thm:NP} to deduce that
\begin{align}
\sum_{k=0}^{n-1}q^{-{k+1\choose 2}}[3k+1]\frac{(q;q^2)_{k}^3(-q;q)_k^2}{(q^2;q^2)_{k}^3}
\equiv 0 \pmod{A_n(q)C_n(q)}. \label{eq:mod-ac}
\end{align}
Multiplying the left-hand side of \eqref{eq:mod-ac} by $(-q;q)_{n-1}^4$,
we conclude that \eqref{eq:conj-guo2} is true modulo $A_n(q)C_n(q)$.

It remains to show that \eqref{eq:conj-guo2} is also true modulo
$$
\prod_{{d\,|\,n}\atop{d\geqslant2 \text{~is even}}}\Phi_d(q)
\cdot\prod_{{d\in\mathcal{D}_{2n-1,n-1}}\atop{d\text{~is even}}}\Phi_d(q).
$$
This is exactly same as the proof of Theorem \ref{conj:guo} and is omitted here.
\end{proof}

\vskip 2mm \noindent{\bf Acknowledgments.}
The first author was partially supported by the National Natural Science Foundation of China (grant 11771175).
The second author was partially supported by the Natural Science Foundation of Inner Mongolia, China (grant 2020BS01012).

\end{document}